\newcommand{\dfn}[1]{{\textbf{#1}}}
\numberwithin{equation}{section}
\DeclareMathOperator{\lk}{lk}
\DeclareMathOperator{\sgn}{Sign}
\newcommand{\zz}{\ensuremath{\mathbb{Z}}}
\newcommand{\torus}{\ensuremath{\mathbb{T}}}
\theoremstyle{plain}
\newtheorem{thm}{Theorem}[section]
\newtheorem{cor}[thm]{Corollary}
\newtheorem{lem}[thm]{Lemma}
\newtheorem{prop}[thm]{Proposition}
\theoremstyle{definition}
\newtheorem{defn}[thm]{Definition}
\newtheorem*{assumption}{Assumption}
\theoremstyle{remark}
\newtheorem{rem}[thm]{Remark}
\newtheorem{ex}[thm]{Example}
\begin{document}

\title[Refined Non-Orientable $4$-genus of Torus Knots]{On a Refinement of the Non-Orientable $4$-genus of Torus Knots}

\author[J.M. Sabloff]{Joshua M. Sabloff} \address{Haverford College, Haverford, PA 19041} \email{jsabloff@haverford.edu} 
\urladdr{\url{https://jsabloff.sites.haverford.edu/}}

\date{\today}
	
\begin{abstract}
	In formulating a non-orientable analogue of the Milnor Conjecture on the $4$-genus of torus knots, Batson developed an elegant construction that produces a smooth non-orientable spanning surface in $B^4$ for a given torus knot in $S^3$.  While Lobb showed that Batson's surfaces do not always minimize the non-orientable $4$-genus, we prove that they always do minimize among surfaces that share their normal Euler number.  We also completely determine the possible pairs of normal Euler number and first Betti number for non-orientable surfaces whose boundary lies in a class of torus knots for which Batson's surfaces are non-orientable $4$-genus minimizers.
\end{abstract}

\maketitle

% ********************
\section{Introduction}
\label{sec:intro}

The orientable $4$-genus of a knot in $S^3$ and its significant role in low-dimensional topology have been extensively studied, but comparatively little is known about its non-orientable counterpart.  For example, an important milestone in the study of the smooth orientable $4$-genus was Kronheimer and Mrowka's solution of the Milnor Conjecture, which showed that the $3$- and $4$-genera of torus knots coincide \cite{km:surfaces1, km:surfaces2}; see also \cite{rasmussen:slice}. On the other hand, the non-orientable $4$-genus of torus knots is still not known in general, though the goal of this paper is to increase our understanding.

To specify the problem more precisely, say that, for a knot $K \subset S^3$, a properly embedded smooth (non-orientable) surface  $F \subset B^4$ with $\partial F = K$ is a \dfn{(non-orientable) filling of $K$}. The \dfn{non-orientable genus of $K$} is
\[ \gamma_4(K) = \min \{b_1(F) \,:\, F \text{ is a non-orientable filling of } K \}.\] 
Lower bounds on the non-orientable $4$-genus have proven to be more subtle to construct than those on the orientable $4$-genus.  In this paper, we will rely on lower bounds derived from the signature \cite{gl:signature} and on the $\upsilon$ invariant from knot Floer homology \cite{oss:unoriented}; see Section~\ref{sec:lower-bounds} for details.  See \cite{gl:non-ori-4-genus, my:4-genus, viro:positioning, yasuhara:connecting} for other efforts based on classical tools and \cite{allen:geography, ballinger:concordance-kh, batson:non-ori-slice, bkst:non-ori-genus, ds:cs-clasp, gl:non-ori-4-genus, gm:non-ori-genus} for more recent developments based on the Heegaard Floer package, gauge theory, or Khovanov homology.

To approach the question of the non-orientable $4$-genus of torus knots, Batson \cite{batson:non-ori-slice} proposed a construction of minimal-genus fillings using \dfn{pinch moves}, which, briefly, consist of attaching a non-orientable band between two adjacent strands of a torus knot lying on a standard torus.  The result of a pinch move is another torus knot, which, as we shall detail in Section~\ref{ssec:pinch-construction}, is strictly simpler. Thus, repeatedly pinching a torus knot $T(p,q)$ yields an unknot, and attaching the non-orientable bands to a disk bounded by the unknot yields a \dfn{pinch surface} $F(p,q)$.  Batson conjectured that pinch surfaces realize $\gamma_4(T(p,q))$. Jabuka and Van Cott \cite[Corollary 1.12]{jvc:non-ori-milnor} showed that Batson's conjecture holds for the infinite family of torus knots for which the $\upsilon$ bound is sharp (cf.\ Corollary~\ref{cor:oss}). We shall term such knots \dfn{JVC knots}; see Definition~\ref{defn:jvc-knot} for the precise characterization. On the other hand, Lobb \cite{lobb:counterex} showed that, for $T(4,9)$, the pinch surface does not realize the non-orientable $4$-genus; see \cite{bkst:non-ori-genus, longo:counterex} for further counterexamples. 

To explore Batson's conjecture and pinch surfaces more deeply, recall that, in contrast to the orientable case, a non-orientable filling $F$ has an interesting additional invariant:  the Euler class of its normal bundle, termed the \dfn{normal Euler number} $e(F)$. The normal Euler number may be used to refine the non-orientable $4$-genus by defining, for any $e \in 2\zz$, 
\[\gamma_4^e(K) = \min\{b_1(F)\,:\, F \text{ is a non-orientable filling of } K \text{ with } e(F)=e\}.\]
The main result of this paper shows that pinch surfaces do, indeed, minimize genus among fillings that share their normal Euler number.

\begin{thm}[Refined Batson Conjecture] \label{thm:rbc}
	The pinch surface $F(p,q)$ realizes the minimal first Betti number among all non-orientable fillings $F$ of $T(p,q)$ with $e(F) = e(F(p,q))$, i.e.\ $\gamma_4^e(T(p,q)) = b_1(F(p,q))$.
\end{thm}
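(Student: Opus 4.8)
The plan is to prove the two inequalities hidden in the statement separately. The inequality $\gamma_4^{e}(T(p,q)) \le b_1(F(p,q))$ for $e = e(F(p,q))$ is immediate, since $F(p,q)$ is itself a non-orientable filling of $T(p,q)$ with that normal Euler number; so the content is the reverse inequality, that every non-orientable filling $F$ of $T(p,q)$ with $e(F) = e(F(p,q))$ satisfies $b_1(F) \ge b_1(F(p,q))$.

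For this lower bound I would feed the quantity $e(F)$ into the two families of bounds recalled in Section~\ref{sec:lower-bounds}: the signature bound of \cite{gl:signature} and the $\upsilon$ bound of \cite{oss:unoriented} restated in Corollary~\ref{cor:oss}. The key feature is that each of these bounds on $b_1(F)$ has a right-hand side that depends affinely on $e(F)$ --- schematically of the shape $b_1(F) \ge |\,\sigma(T(p,q)) + \tfrac12 e(F)\,|$ and $b_1(F) \ge |\,2\upsilon(T(p,q)) + \tfrac12 e(F)\,|$ --- so fixing $e(F) = e(F(p,q))$ turns them into genuine numerical lower bounds. It therefore suffices to show that, at $e = e(F(p,q))$, the larger of these two quantities is at least $b_1(F(p,q))$. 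Combined with the known values of $\sigma$ and $\upsilon$ for torus knots (in particular the semigroup formula for $\upsilon$) and with the descriptions of $b_1(F(p,q))$ and $e(F(p,q))$ coming from the pinch construction of Section~\ref{ssec:pinch-construction}, this reduces the theorem to a purely arithmetic inequality in $p$ and $q$.

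To verify that inequality I would induct along the pinch recursion itself. A single pinch move sends $T(p,q)$ to a strictly simpler torus knot $T(p',q')$, removes exactly one band from the pinch surface --- so that $b_1(F(p,q)) = b_1(F(p',q')) + 1$ --- and shifts the normal Euler number by an amount prescribed by the framing of that band. Hence it is enough to check that at each step the controlling lower bound (signature or $\upsilon$, evaluated at the appropriate Euler number) drops by at most one, the base case being the unknot, where $b_1$, $e$, $\sigma$, and $\upsilon$ all vanish. For the JVC knots the $\upsilon$ bound is already sharp for $\gamma_4$ by \cite{jvc:non-ori-milnor}, hence a fortiori sharp at $e = e(F(p,q))$; for families such as the $T(2,2k+1)$ one expects to check directly that $|\,\sigma(T(p,q)) + \tfrac12 e(F(p,q))\,| = b_1(F(p,q))$, so that the signature bound alone is sharp at $e(F(p,q))$; and for the remaining torus knots --- including Lobb's $T(4,9)$, at which neither bound is sharp for $\gamma_4$ itself --- one must show that nonetheless the signature or the $\upsilon$ bound becomes sharp once $e$ is constrained to $e(F(p,q))$, using the recursive behavior of $\sigma(T(p,q))$ and $\upsilon(T(p,q))$ along the pinch recursion of \cite{jvc:non-ori-milnor}.

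I expect two obstacles. The first, essential but largely bookkeeping, is to pin down $e(F(p,q))$ exactly: this is not proportional to the pinch number and requires careful tracking of each band's self-framing contribution as the bands are pushed into $B^4$; getting this formula exactly right is the whole game, since --- as Lobb's example shows --- the pinch surface is only optimal at its own Euler number, not at neighboring ones. The second, and the genuine difficulty, is the case analysis establishing that the maximum of the signature and $\upsilon$ bounds never dips below $b_1(F(p,q))$ at $e = e(F(p,q))$: one must rule out the existence of a torus knot at which both bounds slip strictly below the pinch number simultaneously. This is where the interplay of $\sigma$, $\upsilon$, and the arithmetic of the pinch recursion has to be pushed, and it is the step most likely to demand a delicate multi-case induction.
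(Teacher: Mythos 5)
Your framing is right as far as it goes: the inequality $\gamma_4^e(T(p,q)) \le b_1(F(p,q))$ is trivial, and the content is to show that an $e$-dependent lower bound on $b_1$, evaluated at $e = e(F(p,q))$, is at least $b_1(F(p,q))$. But the proposal stops exactly where the theorem's content lies. You never compute $e(F(p,q))$, and you never establish the arithmetic inequality you reduce to: both are deferred to a ``delicate multi-case induction'' along the pinch recursion, split into knots where the $\upsilon$ bound is sharp, families where the signature bound is supposedly sharp, and a residual class (containing Lobb's $T(4,9)$) where you say one ``must show'' that one of the two bounds becomes sharp once $e$ is constrained --- which is precisely the assertion to be proved, and nothing in your plan shows that this residual case can be closed. (Your JVC case is fine, but for the trivial reason that $\gamma_4^e \ge \gamma_4 = b_1(F(p,q))$ there; the substance of the theorem is the non-JVC knots, which your plan leaves open.)

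The paper's proof involves no case analysis and no use of the signature bound at all. Its technical input is the formula $e(F(p,q)) = p_0 - pq$ (Lemma~\ref{lem:e}): applying Lemma~\ref{lem:oss-e} to each pinch cobordism with framings induced by the standard torus gives $\lk(T(p_k,q_k),\lambda) = p_kq_k$, so each band contributes $p_{k-1}q_{k-1} - p_kq_k$ and the sum telescopes. Combining this with Jabuka--Van Cott's identity $2\upsilon(T(p,q)) = b_1(F(p,q)) + \tfrac12(p_0 - pq)$ (Theorem~\ref{thm:jvc}) gives $2\upsilon(T(p,q)) - \tfrac12 e(F(p,q)) = b_1(F(p,q))$ for \emph{every} torus knot, so Theorem~\ref{thm:oss} alone yields $\gamma_4^e(T(p,q)) \ge b_1(F(p,q))$ at $e = e(F(p,q))$, with no induction and no restriction on $(p,q)$. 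To complete your argument you would need at minimum these two ingredients (the normal Euler number computation and the relation of $\upsilon$ to the pinch sequence); once you have them, the multi-case induction you flag as the genuine difficulty is unnecessary, and as written your proposal leaves the key step unproven.
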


To give the theorem above more context, we turn to  Allen's work on the ``geography'' of non-orientable fillings \cite{allen:geography}.  For a given knot $K \subset S^3$, Allen asked for the set $R(K)$ of pairs $(e,b) \in \zz^2$ realized by the normal Euler number and non-orientable genus, repectively, of a filling $F$ of $K$.  To state the second main theorem of the paper, we introduce the \dfn{wedge} $W_{(E,B)} \subset \zz^2$, which is defined by
\begin{align*}
W_{(E,B)} = \bigl\{(e,b) \in 2\zz \times \zz \,:\, &\frac{1}{2}|e-E| \leq b-B, \\ & e \equiv 2b \mod 4 \bigr\}.
\end{align*}
For a non-orientable filling $F$, we use the shorthand $W_F = W_{(e(F), b_1(F))}$.

\begin{thm} \label{thm:jvc-geography}
	For any JVC knot $T(p,q)$, the realizable set $R(T(p,q))$ coincides with $W_{F(p,q)}$.  
\end{thm}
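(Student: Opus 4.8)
The plan is to prove the two inclusions $W_{F(p,q)} \subseteq R(T(p,q))$ and $R(T(p,q)) \subseteq W_{F(p,q)}$. For the inclusion $R(T(p,q)) \subseteq W_{F(p,q)}$, unwinding the definition of the wedge shows it is equivalent to two assertions about an arbitrary non-orientable filling $F$ of $T(p,q)$: (i) $e(F) \equiv 2 b_1(F) \pmod{4}$, and (ii) $b_1(F) \geq b_1(F(p,q)) + \tfrac12 |e(F) - e(F(p,q))|$. Assertion (i) holds for every non-orientable filling of any knot: the Gordon--Litherland form $\mathcal{G}_F$ is a nondegenerate symmetric form of rank $b_1(F)$ (its determinant is $\pm\det T(p,q)$, which is odd), so $\sigma(\mathcal{G}_F) \equiv b_1(F) \pmod{2}$, while the Gordon--Litherland formula gives $\sigma(T(p,q)) = \sigma(\mathcal{G}_F) - \tfrac12 e(F)$ with $\sigma(T(p,q))$ even. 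Assertion (ii) is the heart of the matter. Its $e(F) = e(F(p,q))$ case is precisely Theorem~\ref{thm:rbc}; note that one cannot deduce the general case by ``normalizing'' $e(F)$, since internal connected sum with copies of $\rr P^2$ (the device used below to change the normal Euler number) also raises $b_1(F)$ and so moves the estimate in the wrong direction. Instead I would use the two lower bounds of Section~\ref{sec:lower-bounds}: each contributes an affine lower bound for $b_1(F)$ in terms of $e(F)$ of slope $\pm\tfrac12$ --- the signature bound of \cite{gl:signature} one of slope $+\tfrac12$ that binds for $e(F) \geq e(F(p,q))$, and the $\upsilon$ bound of \cite{oss:unoriented} (Corollary~\ref{cor:oss}) one of slope $-\tfrac12$ that binds for $e(F) \leq e(F(p,q))$. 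The JVC hypothesis is exactly the statement that the $\upsilon$ bound is attained by $F(p,q)$, so its line passes through $(e(F(p,q)), b_1(F(p,q)))$; a parallel computation for the Gordon--Litherland form of the pinch surface --- which I expect to be definite, so that the signature bound is sharp there as well --- shows the other line passes through the same point. These two lines bound $W_{F(p,q)}$, which gives (ii).

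For the inclusion $W_{F(p,q)} \subseteq R(T(p,q))$, I would realize each lattice point of the wedge by an explicit filling. The pinch surface $F(p,q)$ realizes the apex $(e(F(p,q)), b_1(F(p,q)))$. For $k, \ell \in \zz_{\geq 0}$, take the internal connected sum of $F(p,q)$ with $k$ standardly embedded copies of $\rr P^2 \subset S^4$ of normal Euler number $+2$ and $\ell$ such copies of normal Euler number $-2$; each summand raises $b_1$ by one and changes the normal Euler number by the corresponding $\pm 2$, since the normal Euler number is additive under connected sum. The result is a non-orientable filling of $T(p,q)$ with invariants $\bigl(e(F(p,q)) + 2(k - \ell),\, b_1(F(p,q)) + k + \ell\bigr)$. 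Setting $m = k - \ell$ and $n = k + \ell$, these are exactly the pairs $(e(F(p,q)) + 2m,\, b_1(F(p,q)) + n)$ with $n \geq |m|$ and $m \equiv n \pmod{2}$, and a one-line check, using the congruence $e(F(p,q)) \equiv 2 b_1(F(p,q)) \pmod{4}$ from (i), identifies this set with $W_{F(p,q)}$.

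The main obstacle is the constraint direction, and within it the need to account for \emph{both} edges of $W_{F(p,q)}$: the JVC hypothesis supplies only the edge coming from the $\upsilon$ bound, so one must separately confirm --- by analyzing the pinch construction, showing its Gordon--Litherland form is definite, or equivalently by reconciling Jabuka--Van Cott's value of $b_1(F(p,q))$ \cite{jvc:non-ori-milnor} with $\sigma(T(p,q))$ and $\upsilon(T(p,q))$ --- that the signature bound provides the opposite edge, with vertex again at $(e(F(p,q)), b_1(F(p,q)))$. Once that numerical coincidence is in place the two inclusions combine to give $R(T(p,q)) = W_{F(p,q)}$; the realizability half is then routine, resting only on standard facts about $\rr P^2 \subset S^4$ under connected sum.
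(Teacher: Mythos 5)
Your proposal takes essentially the same route as the paper: the two Euler-number-dependent lower bounds (Theorem~\ref{thm:gl} and Theorem~\ref{thm:oss}) supply the slope-$\pm\tfrac{1}{2}$ edges of the wedge, the identities $2\upsilon(T(p,q)) = b_1(F(p,q)) + \tfrac{1}{2}e(F(p,q))$ (Corollary~\ref{cor:upsilon-e}, valid for all torus knots) and $e(F(p,q)) = 2\sigma(T(p,q)) + 2b_1(F(p,q))$ place the apex at $(e(F(p,q)), b_1(F(p,q)))$, and connected sums with $\rr P^2$ (or the band move of Lemma~\ref{lem:oss-e}) realize every wedge point, exactly as in Lemmas~\ref{lem:sigma-upsilon-wedge} and \ref{lem:F-wedge}. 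Your only slips are attributional and do not affect the outline: sharpness of the $\upsilon$ edge at the pinch surface comes from Corollary~\ref{cor:upsilon-e} rather than from the JVC hypothesis (and the relevant slope-$-\tfrac{1}{2}$ bound is Theorem~\ref{thm:oss}, not Corollary~\ref{cor:oss}), while the JVC hypothesis, combined with Proposition~\ref{prop:upsilon-sigma} and Corollary~\ref{cor:upsilon-e}, is precisely what confirms the signature edge you flag as the remaining numerical coincidence---no appeal to definiteness of a Gordon--Litherland form (which in any case is defined for spanning surfaces in $S^3$, not fillings in $B^4$) is needed.
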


The theorem is illustrated in Figure~\ref{fig:jvc-geography}. As a corollary, JVC knots satisfy Allen's conjecture that torus knots realize their non-orientable $4$-genus at a \emph{unique} normal Euler number \cite[Conjecture 1.6]{allen:geography}.

\begin{figure}
\labellist
\small\hair 2pt
 \pinlabel {$b$} [b] at 239 154
 \pinlabel {$e$} [l] at 280 18
 \pinlabel {$(e(F), b_1(F))$} [tl] at 80 32
\endlabellist
\includegraphics{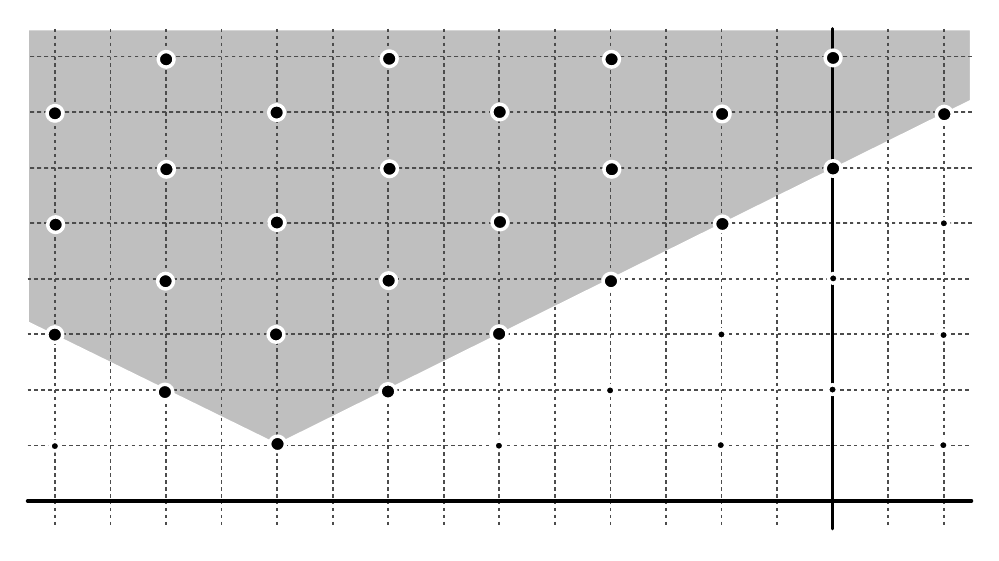}
\caption{The set of realizable pairs $(e(F), b_1(F))$ for fillings $F$ of a JVC knot $T(p,q)$ lie in the shaded wedge $W_{F(p,q)}$. The $T(3,4)$ knot is illustrated here.}
\label{fig:jvc-geography}
\end{figure}

The remainder of the paper is organized as follows.  In Section~\ref{sec:pinch}, we recall the construction of pinch surfaces and compute their normal Euler numbers. We review several lower bounds on the non-orientable genus and define the class of JVC torus knots in Section~\ref{sec:lower-bounds}.  Finally, in Sections~\ref{sec:proof} and \ref{sec:geography}, we prove Theorems~\ref{thm:rbc} and \ref{thm:jvc-geography}, respectively.

% **********
\subsection*{Acknowledgements}

The investigations leading to the main result grew out of discussions with Braeden Reinoso about when non-orientable Lagrangian fillings of Legendrian knots realize $\gamma_4^e$.  The author thanks Slaven Jabuka and Cornelia Van Cott for encouragement and generative conversations that helped to contextualize the content of the paper, and the referee for comments that improved the exposition.  The author further thanks the Institute for Advanced Study for hosting him during the preparation of the paper; in particular, this material is based upon work supported by the Institute for Advanced Study.

% ********************
\section{Pinch Surfaces}
\label{sec:pinch}

% **********
\subsection{Construction of Pinch Surfaces}
\label{ssec:pinch-construction}

As described in the introduction, a pinch surface $F(p,q)$ for a torus knot $T(p,q)$ is, roughly speaking, the result of attaching a specific sequence of non-orientable bands to $T(p,q)$ to produce an unknot, and then capping the unknot off with a disk.  We will make this construction more precise using ideas from \cite{batson:non-ori-slice} and \cite{jvc:non-ori-milnor}, adapting notation to better dovetail with later arguments.

We start by defining a non-orientable band cobordism. An embedding $h$ of the square $S=[0,1]^2$ into $S^3$ is an \dfn{non-orientable band} for a knot $K$ if $K \cap h(S) = h([0,1] \times \{0,1\})$, the orientation of $K$ agrees with that of $h([0,1] \times \{0\})$, and the orientation of $K$ disagrees with that of $h([0,1] \times \{1\})$ (or vice versa). A \dfn{non-orientable band move} on $K$ along $h$ is the result of the surgery that replaces the arcs $K \cap h(S)$ with the image of $\{0,1\} \times [0,1]$; we denote the result by $K \# h$.  A \dfn{non-orientable band cobordism} for $K$ is a properly embedded surface in $S^3 \times [0,1]$ with boundary given by $K \subset S^3 \times \{1\}$ and $K \# h \subset S^3 \times \{0\}$ constructed by attaching a $1$-handle to $K \times \{1\}$ along $K \cap h$ with framing specified by $h$.  The direction of the cobordism has been chosen so that successive non-orientable band moves produce a surface in $B^4$.

Next, we specialize to torus knots $T(p,q)$, which we understand to lie on a standard torus $\torus$ in $S^3$. For the rest of the paper, we make the following assumption about the indices $p$ and $q$:

\begin{assumption}
Assume that $p$ and $q$ are positive and relatively prime, and that $q$ is odd with $p > q$ if $p$ is also odd, reversing the roles of $p$ and $q$ if necessary if this were not the case.
\end{assumption}

We visualize $T(p,q)$ as a collection of parallel lines with slope $p/q$ on a unit square with opposite sides identified.  Up to isotopy, there is a unique choice of non-orientable band $h \subset \torus$ for $T(p,q)$.  We call a non-orientable band move on $T(p,q)$ along $h$ a \dfn{pinch move}; see Figure~\ref{fig:pinch-move}. Note that $T(p,q) \# h$ also lies on $\torus$, and hence is, itself, a torus knot $T(r,s)$. We use the term \dfn{pinch cobordism} for the cobordism between $T(r,s) \times \{0\}$ and $T(p,q) \times \{1\}$ resulting from a concatenation of the trace of a ``straightening'' isotopy from $T(r,s)$ to $T(p,q) \# h$ and the non-orientable band cobordism coming from the pinch move.

\begin{figure}
\labellist
\hair 2pt
 \pinlabel {$=$} [ ] at 214 46
\endlabellist
\includegraphics{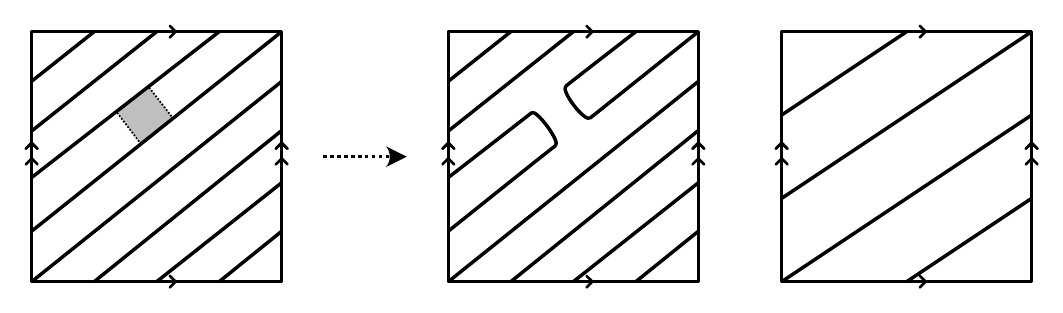}
\caption{A non-orientable band move applied to $T(p,q)$ yields $T(r,s)$.}
\label{fig:pinch-move}
\end{figure}

\begin{lem}[Lemma 2.1 of \cite{jvc:non-ori-milnor}] \label{lem:pinch-move}
	The result of applying a pinch move to $T(p,q)$ is the torus knot $T(p-2t, q-2u)$, where $t$ and $u$ are the integers uniquely determined by the requirements that
	\begin{align*}
		t \equiv -q^{-1} &\mod p \quad \text{and} \quad 0 \leq t \leq p-1, \\
		u \equiv p^{-1} & \mod q  \quad \text{and} \quad 0 \leq u \leq q-1.
	\end{align*}
\end{lem}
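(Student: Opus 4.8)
The plan is to do the whole computation on the standard torus $\torus$, exploiting the fact that an essential simple closed curve on $\torus$ is isotopic (on $\torus$, hence in $S^3$) to the standard $(r,s)$--curve, where $(r,s)$ is its primitive class in $H_1(\torus)\cong\zz^2$. Coordinatize $\torus=\rr^2/\zz^2$ so that $T(p,q)$ is the image of the geodesic $\tau\mapsto(q\tau,p\tau)$, carrying the class with coordinates $(q,p)$ (so that the curve of class $(a,b)$ is $T(b,a)$). This curve meets the circle $\{y=0\}$ in the $p$ points $\tau=k/p$, $k=0,\dots,p-1$, and the strand through the basepoint is adjacent to the one with $\tau=q^{-1}/p$, where $q^{-1}$ denotes the inverse of $q$ modulo $p$ taken in $\{1,\dots,p-1\}$. (Here $q^{-1}\bmod p$, together with the integer $m$ defined by $qq^{-1}=1+mp$, are the quantities that will turn into $t$ and $u$ after a change of representative.) Since the pinch band $h$ lies on $\torus$ and joins these two adjacent strands, $T(p,q)\# h$ also lies on $\torus$ and is connected, so it is a torus knot $T(r,s)$ with $\gcd(r,s)=1$; everything reduces to identifying its class in $H_1(\torus)$.

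The key point is an orientation count. The two arcs $T(p,q)\cap h$ cut $K=T(p,q)$ into an arc $B_1$, the image of $[0,q^{-1}/p]$, and the complementary arc $B_2$. In $K\# h$ the two band arcs are homologically trivial, and --- because $h$ is a \emph{non-orientable} band, so that $K$'s orientation agrees with exactly one of the two boundary arcs $h([0,1]\times\{i\})$ --- precisely one of $B_1,B_2$ is traversed opposite to its original orientation. Hence $[K\# h]=\pm([K]-2\beta)$, where $\beta\in H_1(\torus)$ is the class obtained by closing up whichever $B_i$ reverses with a short arc through the band region; since the two completed arcs $\bar B_1,\bar B_2$ satisfy $[\bar B_1]+[\bar B_2]=[K]$, the two choices of $\beta$ give negative classes and hence the same torus knot, so there is no ambiguity.

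Computing $\beta$ (say via $B_1$) is a short calculation in $\rr^2$: the endpoints of $B_1$ lie over $(0,0)$ and $(1/p,0)$, and following the geodesic over a parameter interval of length $q^{-1}/p$ displaces a point by $(qq^{-1}/p,q^{-1})$; absorbing the fractional part $1/p$ into the closing arc gives $\beta=(m,q^{-1})$. Therefore $[K\# h]=\pm(q-2m,\,p-2q^{-1})$, i.e.\ $K\# h=T(p-2q^{-1},\,q-2m)$. To match the normalization in the statement, set $t:=p-q^{-1}$ and $u:=q-m$: then $0\le t\le p-1$ and $t\equiv -q^{-1}\pmod p$; a one-line check using $pu=q(p-q^{-1})+1$ gives $0\le u\le q-1$ and $u\equiv p^{-1}\pmod q$; and $pu-qt=1$, which with $q$ odd forces $\gcd(p-2t,q-2u)=1$. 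Since $p-2t=-(p-2q^{-1})$ and $q-2u=-(q-2m)$, the symmetry $T(a,b)=T(-a,-b)$ identifies $K\# h$ with $T(p-2t,q-2u)$; uniqueness of $t,u$ is immediate from the congruences and the prescribed ranges.

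The main obstacle is not conceptual but a matter of pinning down conventions: orienting the non-orientable band carefully enough to know which of $B_1,B_2$ reverses (this is what I sidestep above using $[\bar B_1]+[\bar B_2]=[K]$), and then checking that the representatives of $q^{-1}\bmod p$ and $p^{-1}\bmod q$ singled out by the statement are the ones produced by the calculation. One also wants to confirm that the non-orientable band $h\subset\torus$ --- unique up to isotopy, as the text notes --- is the one joining \emph{adjacent} strands, so that there is a single candidate for $\beta$; with that in place the remaining steps are the routine number theory sketched above.
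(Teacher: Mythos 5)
This statement is not proved in the paper at all: it is quoted verbatim as Lemma 2.1 of \cite{jvc:non-ori-milnor}, so there is no internal argument to compare yours against. Judged on its own, your proof is correct and is the natural route: the pinched curve stays on $\torus$, hence is determined by its primitive class in $H_1(\torus)$; a non-coherent band move on a knot reverses exactly one of the two complementary arcs, giving $[K\# h]=\pm([K]-2\beta)$; the lift computation $\beta=(m,q^{-1})$ with $qq^{-1}=1+mp$ is right; and the renormalization $t=p-q^{-1}$, $u=q-m$ together with $pu-qt=1$ yields precisely the congruences and ranges in the statement (for $q\geq 2$; the case $q=1$ is already the unknot). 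Your sidestep of the sign ambiguity is legitimate, since negating both coordinates of the class gives the same unoriented torus knot, and this also makes the conclusion insensitive to the $\pm$ convention correlating $t\equiv -q^{-1}$ with $u\equiv +p^{-1}$. Two minor points. First, the parenthetical ``with $q$ odd'' is not needed for $\gcd(p-2t,q-2u)=1$: the identity $u(p-2t)-t(q-2u)=pu-qt=1$ gives it outright (and primitivity is anyway automatic because the surgered curve is embedded and connected on $\torus$). Second, the step ``$h$ non-orientable $\Rightarrow$ exactly one of $B_1,B_2$ is reversed'' is the correct geometric content, but be aware that the paper's definition of a non-orientable band must be read with the boundary orientations of $h([0,1]\times\{0,1\})$ (equivalently, as non-coherence of the band move); with the product orientations, the flat band between two parallel strands would literally fail the stated condition. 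Since non-coherence is exactly what holds for the pinch band and exactly what your homology count uses, this is convention-pinning of the kind you already flag in your final paragraph, not a gap.
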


The \dfn{sign} of a pinch move is $\epsilon = \sgn(q-2u)$; by \cite[Lemma 2.3]{jvc:non-ori-milnor}, the signs of $p-2t$ and $q-2u$ agree when both quantities are nonzero.  If the sign of a pinch move is negative, then replace $T(r,s)$ by $T(-r,-s)$ to maintain the assumption on the positivity of the indices. Denote a pinch move with sign $\epsilon$ by $T(p,q) \xrightarrow{\epsilon} T(r,s)$.  Lemma~\ref{lem:pinch-move} implies that $0 \leq |r| <p$ and $0 \leq |s| < q$.  Hence, $T(p,q)$ must reduce to the unknot $U$ after some $n$ pinch moves, which we keep track of using the following notation:
\begin{equation} \label{eq:pinch} 
T(p,q) = T(p_n,q_n) \xrightarrow{\epsilon_n} T(p_{n-1},q_{n-1}) \xrightarrow{\epsilon_{n-1}} \cdots \xrightarrow{\epsilon_1} T(p_0,q_0) = U.
\end{equation}
By \cite[Theorem 2.7]{jvc:non-ori-milnor}, we have $q_0 = 1$.

Finally, the \dfn{pinch surface} $F(p,q) \subset B^4$ of $T(p,q)$ is the non-orientable filling of $T(p,q)$ constructed from concatenating the minimal number of pinch cobordisms necessary to transform $T(p,q)$ into an unknot $T(p_0,1)$, and then gluing a disk to the unknot $T(p_0,1)$.  

% **********
\subsection{Normal Euler Numbers of Pinch Surfaces}
\label{ssec:normal-euler}

A key ingredient in understanding the refined Batson Conjecture is a computation of the normal Euler number of a pinch surface.  We begin by recalling a definition of the normal Euler number of a closed surface in a $4$-manifold, then proceed to adjust it to a properly embedded surface. See \cite{gl:signature} as well as \cite{batson:non-ori-slice, oss:unoriented} for further discussion.  The actual computation of the normal Euler number of a pinch surface, which will be accomplished in Lemma~\ref{lem:e}, will use Lemma~\ref{lem:oss-e} rather than the details of the definition.  

As a first step, let $F$ be a smooth closed surface in an oriented $4$-manifold $M$; the manifold $M$ will be either $B^4$ or $S^3 \times [0,1]$ in this paper.  To concretely compute the Euler number of the normal bundle to $F$, let $F'$ be a small transverse pushoff of $F$, i.e.\ a section of the normal bundle.  At each point $x \in F \cap F'$, find compatible orientations of $T_x F$ and $T_x F'$, and use those orientations and an ambient orientation of $M$ to assign a sign to each such $x$.  The \dfn{normal Euler number} $e(F)$ is defined to be the sum of the signs of the intersection points $x \in F \cap F'$.  

We next extend this definition to a filling $F \subset B^4$ of a knot $K \subset S^3$.  In fact, it will prove useful to go further and define the normal Euler number for a non-orientable cobordism $F$ in $S^3 \times [0,1]$ between knots $K_0 \subset S^3 \times \{0\}$ and $K_1 \subset S^3 \times \{1\}$; a filling of $K$ may be regarded as a cobordism from $K_0 = \emptyset$ to $K_1 =K$.  To define the normal Euler number in this setting, we cap off $F$ by Seifert surfaces of the $K_i$  in $S^3 \times \{i\}$ to form the closed surface $\hat{F}$, and we take the \dfn{normal Euler number $e(F)$} to be the normal Euler number of the closed surface $\hat{F}$.  This definition is equivalent to constructing a transverse pushoff $F'$ of $F$ so that the push-offs $K'_i$ at the ends realize the Seifert framings of $K_i$, and then summing signed intersections of $F$ and $F'$. 

Some facts about the normal Euler number $e(F)$ to keep in mind are that it vanishes for orientable surfaces, that it is always even, that  it satisfies 
\begin{equation}
	e(F) \equiv 2b_1(F) \mod 4
\end{equation}
(see \cite{Massey:Euler}), and that it is additive under concatenation of cobordisms.  This last fact, combined with the computation of the normal Euler number of a band cobordism in the lemma below, may be used to compute the normal Euler number of a surface constructed via a sequence of band moves.

\begin{lem}[Lemma 4.2 of \cite{oss:unoriented}] \label{lem:oss-e}
	Suppose $h$ is a non-orientable band for a knot $K$.  Choose a section $s$ of the normal bundle of $h$ and framings $\lambda$ of $K$ and $\lambda_h$ of $K \# h$ that agree with $s$ along $K \cap h(S)$.  The normal Euler number of the non-orientable band cobordism $F$ is then computed by
	\[e(F) = \lk(K \# h,\lambda_h) - \lk(K, \lambda).\]
\end{lem}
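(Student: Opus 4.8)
The plan is to compute $e(F)$ directly from its description as the signed count $F\cdot F'$ of intersections with a transverse pushoff $F'$ realizing the Seifert framings of $K$ and $K\#h$ at the two ends, choosing $F'$ so that all of the interesting geometry is forced into collars of the boundary. First I would fix a standard model of the non-orientable band cobordism: since $h$ lies in $S^3$, take $F$ to be a product collar $C=K\times[1/2,1]$ on the outgoing end $K\subset S^3\times\{1\}$, together with the square $H=h(S)$ in the slice $S^3\times\{1/2\}$ attached to $C$ as a (non-orientable) $1$-handle along $K\cap h(S)$, plus a product collar on $K\#h\subset S^3\times\{0\}$ that plays no role in the computation.

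Next I would assemble the pushoff. Over $H$ use the section $s$ (as a normal field to $F$), and over $C$ use the extension of the framing $\lambda$ of $K$ by the product structure; since $\lambda$ agrees with $s$ along $C\cap H=K\cap h(S)$, these glue to a nowhere-zero normal field $\tilde s$ along $F$. The pushoff $F'$ it determines is therefore disjoint from $F$, so $F\cdot F'=0$, while by construction $F'$ realizes the framing $\lambda$ along the outgoing end $K$ and (using that $\lambda_h$ agrees with $s$ along $K\#h\cap h(S)$ and with $\lambda$ along the shared arc $K\setminus h(S)$) the framing $\lambda_h$ along the incoming end $K\#h$.

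It then remains to pay for converting the end-framings of $F'$ to the Seifert framings that the definition of $e(F)$ requires. The Seifert framing of a knot $J$ differs from any framing $\mu$ by $\lk(J,\mu)$ full twists, and re-routing $F'$ through those twists within a collar of that boundary component creates precisely $\lk(J,\mu)$ double points of $F$ with $F'$, all of one sign, the sign being determined by the co-orientation of that end of the cobordism alone. Since the two ends of a cobordism carry opposite co-orientations, the corrections at $K$ and at $K\#h$ enter with opposite signs — so the formula must be a difference — and tracking the convention that band moves run from $t=1$ down to $t=0$ (so that iterating them builds a filling of $K$ in $B^4$) fixes the correction at $K$ as $-\lk(K,\lambda)$ and at $K\#h$ as $+\lk(K\#h,\lambda_h)$. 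Summing with $F\cdot F'=0$ yields $e(F)=\lk(K\#h,\lambda_h)-\lk(K,\lambda)$. For a consistency check: any coupled twist inserted along the shared arc $K\setminus h(S)$ changes both linking numbers by the same integer, so the right-hand side is independent of the choices, as it must be.

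I expect the main obstacle to be the sign bookkeeping of the last step — orienting the two sheets compatibly at each corrective double point and verifying that the outgoing end really contributes the minus sign — rather than the topology, since the extension of $s$ is unobstructed (the only $2$-cell where it matters, the handle $H$, is a disc) and the count of double points created by a twist is standard. I would pin the signs down on a minimal example, e.g.\ a single non-orientable band on the unknot, whose band cobordism caps off to a M\"obius-band filling of the unknot with $e=\pm2$, before carrying out the general case.
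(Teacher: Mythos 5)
This lemma is not proved in the paper at all: it is quoted verbatim from Lemma 4.2 of \cite{oss:unoriented}, and the paper only ever uses it as a black box (in the proof of Lemma~\ref{lem:e}). So there is no in-paper argument to compare against; judged on its own, your outline is essentially the standard relative-Euler-class proof and is sound. The key steps are all present: a nowhere-zero normal field on $F$ assembled from $s$ over the band and the product extensions of $\lambda$, $\lambda_h$ over the collars, whose pushoff is disjoint from $F$; the observation that $e(F)$ is then entirely accounted for by converting the end framings to the Seifert framings inside collars, each conversion contributing $\pm\lk$ zeros; and the fact that the two boundary components enter with opposite signs because of their opposite co-orientations, so the answer is a universal difference of linking numbers whose overall sign can legitimately be pinned down on a single model (e.g.\ the cross-cap/\(\mathbb{RP}^2\) example with $e=\pm 2$), since your argument shows the end corrections depend only on conventions and not on the knots.

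Two small points deserve emphasis. First, as you implicitly use when you glue the field along the shared arc and again in your consistency check, the formula is only well-posed if $\lambda$ and $\lambda_h$ are coupled along $K\setminus h(S)$ (e.g.\ both restrictions of one normal field along $K\cup h(S)$ extending $s$); the statement's phrase ``agree with $s$ along $K\cap h(S)$'' is loose on this, and in the paper's application both framings come from the normal field of the torus $\torus$, so the coupling holds. You should state this hypothesis explicitly rather than smuggle it in. Second, ``precisely $\lk(J,\mu)$ double points, all of one sign'' should be read algebraically: the interpolation from $\mu$ to the Seifert framing contributes a signed count equal to $\pm\lk(J,\mu)$, which is all the argument needs. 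With those clarifications, the proposal is a correct proof, of the same type one finds in \cite{oss:unoriented}, rather than a new route.
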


The following lemma, which computes the normal Euler number of a pinch surface, is the main technical contribution in this paper.

\begin{lem} \label{lem:e}
	If $T(p,q)$ is transformed to $T(p_0,1)$ in the construction of the pinch surface $F(p,q)$, then
	\[e(F(p,q)) = p_0 - pq.\]
\end{lem}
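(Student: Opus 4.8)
The plan is to compute $e(F(p,q))$ by cutting the pinch surface into pieces and exploiting both the additivity of the normal Euler number under concatenation and the band formula of Lemma~\ref{lem:oss-e}.

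First I would decompose $F(p,q)$, as a cobordism from $\emptyset$ to $T(p,q)$, into the concatenation of the disk $D_0$ capping the unknot $T(p_0,1)$ with the $n$ pinch cobordisms of \eqref{eq:pinch}, each of which is itself the concatenation of a straightening isotopy with a non-orientable band cobordism $B_i$ between $T(p_i,q_i)$ and $T(p_{i-1},q_{i-1})$. Since $e$ vanishes on orientable surfaces, the disk $D_0$ and the (product) isotopy traces each contribute $0$, and additivity gives $e(F(p,q)) = \sum_{i=1}^{n} e(B_i)$.

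Next I would evaluate $e(B_i)$ using Lemma~\ref{lem:oss-e}. The key point is that the pinch band $h_i$ lies on $\torus$, so the normal bundle of $h_i(S)$ in $S^3$ is the restriction of the normal bundle of $\torus$; I can therefore take the section $s$ of Lemma~\ref{lem:oss-e} to be a nowhere-zero normal field to $\torus$ along $h_i(S)$ pointing to one fixed side. The framings of $T(p_i,q_i)$ and of $T(p_{i-1},q_{i-1})$ that agree with $s$ along the band arcs are then the push-offs of these knots off of $\torus$ to that side; inside a tubular neighborhood of a knot lying on $\torus$ such a normal push-off is isotopic to the push-off taken along $\torus$, so these are exactly the surface framings induced by $\torus$. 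It is standard that the self-linking of an $(a,b)$-curve on $\torus$ with its $\torus$-push-off equals $ab$ — two parallel copies of an $(a,b)$-curve form the $(2a,2b)$-torus link, whose two components link $ab$ times (equivalently, the push-off pushed into a complementary solid torus is homologous to $a$ copies of the core, which links the original curve $b$ times). Since the homology class of $T(p_i,q_i)\#h_i$ on $\torus$ is $\pm(p_{i-1},q_{i-1})$ by Lemma~\ref{lem:pinch-move} and its sign normalization, the relevant product of coordinates is $p_{i-1}q_{i-1}$ regardless of which sign is chosen, so Lemma~\ref{lem:oss-e} yields $e(B_i) = p_{i-1}q_{i-1} - p_i q_i$.

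Finally the sum telescopes: $e(F(p,q)) = \sum_{i=1}^{n}(p_{i-1}q_{i-1} - p_iq_i) = p_0q_0 - p_nq_n = p_0 - pq$, using $q_0 = 1$ (from \cite[Theorem~2.7]{jvc:non-ori-milnor}, as noted after \eqref{eq:pinch}) and $(p_n,q_n)=(p,q)$. I expect the main obstacle to lie in the third step: correctly identifying the framings compatible with the section $s$ in Lemma~\ref{lem:oss-e} (recognizing that the normal push-off off of $\torus$ is the right choice and that it coincides with the surface framing), and verifying that all the orientation conventions align so that $B_i$ contributes $+\,p_{i-1}q_{i-1} - p_iq_i$ rather than its negative; the decomposition, the telescoping, and the vanishing of $e$ on the orientable pieces are routine.
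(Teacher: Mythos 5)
Your proposal is correct and follows essentially the same route as the paper's proof: decompose into pinch cobordisms plus the capping disk, apply Lemma~\ref{lem:oss-e} with the section and framings induced by a normal field to $\torus$, identify each self-linking with the linking number of the two components of the doubled torus link to get $p_kq_k$, and telescope. Your extra remark that the sign normalization $\pm(p_{i-1},q_{i-1})$ does not affect the product is a nice explicit treatment of a point the paper leaves implicit.
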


\begin{proof}
	Suppose that the pinch surface from $T(p,q)$ to $T(p_0,1)$ is constructed from $n$ pinches as in Equation~\eqref{eq:pinch}.  We claim that the normal Euler number of the non-orientable band cobordism $F_k$ for $T(p_k, q_k) \to T(p_{k-1},q_{k-1})$ is $p_{k-1}q_{k-1} - p_k q_k$.  The lemma then follows from the claim, the additivity of the normal Euler number, and the fact that the normal Euler number of the final capping disk vanishes.

To prove the claim, we use Lemma~\ref{lem:oss-e} with $K = T(p_k,q_k)$ and $K \# h = T(p_{k-1},q_{k-1})$.  Let the section $s$ and the framings $\lambda$ and $\lambda_h$ all come from a normal vector field to $\torus$; this choice clearly satisfies the hypothesis of the lemma.  Isotope the knot $\lambda$ to another copy of $T(p_k,q_k)$ on $\torus$ by rotating by $\frac{\pi}{2}$ in the direction normal to $K$.  Thus, we see that $\lk(K, \lambda)$ is the same as the linking number between the two components of the torus link $T(2p_k, 2q_k)$.  It is straightforward to check that this linking number is $p_kq_k$.  A similar computation for $K \# h$ and $\lambda_h$ holds, and hence Lemma~\ref{lem:oss-e} implies that
\[
	e(F_k) = \lk(K \# h,\lambda_h) - \lk(K, \lambda) = p_{k-1}q_{k-1} - p_k q_k,
\]
as required.
\end{proof}

% ********************
\section{Lower Bounds on the Non-Orientable Genus}
\label{sec:lower-bounds}

Of particular importance for this paper are two lower bounds that involve both the non-orientable $4$-genus and the normal Euler number. The first bound relies on the signature.

\begin{thm}[\cite{gl:signature}] \label{thm:gl}
	For any filling $F$ of a knot $K$, we have $\left| \sigma(K) - e(F)/2 \right| \leq b_1(F)$.  In particular, we have
	\[\left| \sigma(K) - \frac{e}{2} \right| \leq \gamma_4^e(K).\]
\end{thm}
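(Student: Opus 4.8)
The statement to prove is the Gordon--Litherland-type bound $|\sigma(K) - e(F)/2| \le b_1(F)$ for a filling $F$ of $K$. My plan is to reduce it to the classical $G$-signature / intersection-form comparison, following the standard strategy for such bounds.

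First I would pass to a closed $4$-manifold by capping off $B^4$ with a Seifert surface: push $F$ into $B^4$ and cap off $B^4$ along $S^3$ by gluing a Seifert surface $\Sigma$ for $K$ pushed into a second $B^4$, so that $F \cup_K \Sigma$ becomes a closed (non-orientable) surface $\widehat F$ in $S^4$. Equivalently, and more usefully for signature computations, I would take the double branched cover $W$ of $B^4$ along $F$ (or of $S^4$ along $\widehat F$). The signature of this branched cover is governed on one hand by the signature $\sigma(K)$ of the knot (via the $G$-signature theorem applied to the covering involution, which contributes a term $e(F)/2$ coming from the self-intersection of the branch surface) and on the other hand by the ordinary signature of $W$, whose absolute value is bounded by $b_2(W)$. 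Tracking the Euler-characteristic / homology bookkeeping, $b_2$ of the branched cover is controlled by $b_1(F)$ since $F$ is non-orientable with one boundary component.

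The key steps in order: (1) build the closed surface $\widehat F \subset S^4$ and its double branched cover $Y = \Sigma_2(S^4, \widehat F)$, a closed $4$-manifold; (2) invoke the $G$-signature theorem (Hirzebruch, or Gordon--Litherland's adaptation) to get $\sigma(Y) = -e(\widehat F)/2 + (\text{a contribution from } K) $ — more precisely the formula $\sigma(Y) = 2\sigma(K) - e(F)$ after normalizing the Seifert-surface contribution to zero, since a Seifert surface is orientable with vanishing normal Euler number and the branched cover along it contributes $2\sigma(K)$ in the standard way; (3) estimate $|\sigma(Y)| \le b_2(Y)$; (4) compute $b_2(Y)$ from a Mayer--Vietoris / transfer argument, getting $b_2(Y) = 2 b_1(F)$ (each non-orientable band / cross-cap contributes, and the orientable Seifert-surface side contributes nothing new to $b_2$ of the cover beyond what cancels); (5) combine to get $2|\sigma(K) - e(F)/2| = |\sigma(Y)| \le b_2(Y) = 2b_1(F)$, hence the claim. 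The ``in particular'' statement is then immediate by taking the minimum over all fillings $F$ with $e(F) = e$.

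The main obstacle I expect is getting the constants exactly right in step (2) and step (4): the normalization of the branched-cover signature formula (whether the branch-surface self-intersection enters as $e(F)$, $e(F)/2$, or with a sign), and the precise count of $b_2$ of the branched cover in terms of $b_1(F)$ rather than, say, $b_1(F)$ plus a correction from the Seifert surface. The cleanest route is probably to cite \cite{gl:signature} directly — indeed, this is their theorem — but if one wants a self-contained argument, the careful bookkeeping is: use that $F$ deformation retracts onto a wedge of $b_1(F)$ circles, that the branched double cover of $B^4$ along $F$ has $H_*$ computable from the $\zz/2$-homology of $F$ via the transfer sequence, and that the intersection form there is (up to the orientable summand from capping) the Gordon--Litherland form $\mathcal{G}_F$, whose signature is $\sigma(K) - e(F)/2$ and whose rank is $b_1(F)$. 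Then $|\operatorname{sign}(\mathcal{G}_F)| \le \operatorname{rank}(\mathcal{G}_F)$ is the whole content. I would present the proof in this last form, treating the identification of the Gordon--Litherland form and its rank as the technical heart and citing \cite{gl:signature, Massey:Euler} for the supporting facts.
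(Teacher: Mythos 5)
The paper does not prove this statement at all: it is quoted directly from \cite{gl:signature}, with pointers to \cite[\S 2.3]{allen:geography} and \cite[Theorem 1.1]{yasuhara:connecting} for discussion, so ``cite Gordon--Litherland'' is in fact the paper's entire treatment, and your closing remark that this is the cleanest route matches it. However, the self-contained argument you sketch in steps (1)--(5) has two concrete errors. First, for the closed double branched cover $Y$ of $S^4$ along $\widehat F = F \cup_K \Sigma$, the $G$-signature theorem gives $\sigma(Y) = -e(\widehat F)/2 = -e(F)/2$, not $2\sigma(K) - e(F)$: the double cover of $B^4$ branched along a pushed-in Seifert surface has intersection form $V + V^T$ and hence signature $\sigma(K)$ (not $2\sigma(K)$), and under Novikov additivity that piece is glued in with reversed orientation, so the knot-signature contributions cancel out of $\sigma(Y)$ entirely. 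The closed-cover signature therefore carries no information about $\sigma(K)$, and step (5) cannot produce the desired bound. Second, $b_2(Y)$ is not $2b_1(F)$: the cover over the pushed-in genus-$g$ Seifert surface contributes an orientable summand of rank $2g$, so $b_2(Y) = b_1(F) + 2g(\Sigma)$, which depends on $\Sigma$ and is generally much larger than $2b_1(F)$; the estimate $|\sigma(Y)| \le b_2(Y)$ then yields nothing useful.

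The correct self-contained route is the one you gesture at in your final paragraph, but run on $W = \Sigma_2(B^4, F)$ rather than on $Y$: one shows $b_1(W) = b_3(W) = 0$ rationally and $\chi(W) = 2 - \chi(F)$, so $b_2(W) = b_1(F)$; and $\sigma(W) = \sigma(K) - e(F)/2$ (up to orientation conventions), which is exactly what falls out of combining $\sigma(Y) = -e(F)/2$, Novikov additivity, and the Kauffman--Taylor computation $\sigma(\Sigma_2(B^4,\Sigma)) = \sigma(K)$ for the Seifert piece --- i.e.\ the Seifert side must be accounted for with its honest signature, not normalized to zero. Then $|\sigma(K) - e(F)/2| = |\sigma(W)| \le b_2(W) = b_1(F)$, and the ``in particular'' clause follows by minimizing over fillings with $e(F) = e$, as you say. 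One further conflation to avoid: the Gordon--Litherland form $\mathcal{G}_F$ is attached to spanning surfaces in $S^3$, not to properly embedded surfaces in $B^4$, so identifying ``the intersection form there'' with $\mathcal{G}_F$ is not literally correct for the $4$-dimensional statement being proved, even though the rank-versus-signature bookkeeping you describe has the right shape.
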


See \cite[\S2.3]{allen:geography} or \cite[Theorem 1.1]{yasuhara:connecting} for further discussion of this theorem.

The second bound uses the $\upsilon$ invariant, which is derived from an unoriented version of knot Floer homology.

\begin{thm}[Theorem 1.1 of \cite{oss:unoriented}] \label{thm:oss}
	For any filling $F$ of a knot $K$, we have $\left| 2\upsilon(K) - e(F)/2 \right| \leq b_1(F)$. In particular, we have
	\[\left| 2\upsilon(K) - \frac{e}{2} \right| \leq  \gamma_4^e(K).\]
\end{thm}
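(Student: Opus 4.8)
Since this statement is quoted as Theorem 1.1 of \cite{oss:unoriented}, I only indicate the shape of the argument one would give. The plan has three parts: (i) present a filling as a stack of elementary band cobordisms over the unknot, (ii) estimate the change in $\upsilon$ across a single band cobordism in terms of its normal Euler number, and (iii) telescope.

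For (i): given a filling $F$ of $K$ with $b_1(F) = k$, choose a handle decomposition of $F$ relative to its boundary consisting of a single $0$-handle and $k$ $1$-handles. Pushing these handles into a collar $S^3 \times [0,1] \subset B^4$ exhibits $K$ as the top of a stack of $k$ elementary band cobordisms $C_1, \dots, C_k$ sitting over the unknot $U$ that bounds the $0$-handle disk. Since the normal Euler number is additive under concatenation and vanishes on the capping disk, $e(F) = \sum_{i=1}^{k} e(C_i)$, and each $e(C_i)$ is precisely the linking-number difference computed in Lemma~\ref{lem:oss-e}.

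For (ii) --- the crux --- one shows that if a band cobordism $C$ runs between knots $K'$ and $K''$, then
\[
\left| \upsilon(K'') - \upsilon(K') - \tfrac{1}{4} e(C) \right| \le \tfrac{1}{2}.
\]
This is where the Heegaard Floer input is indispensable: one analyzes the maps induced on the relevant flavor of knot Floer homology by the elementary cobordism, using the unoriented skein / surgery exact triangle to compare the complexes of $K'$ and $K''$ with grading shifts governed by the framing of the band, hence by $e(C)$, and then passes to the $t = 1$ specialization $\upsilon = \Upsilon(1)$ (equivalently, reads off the $d$-invariants of large surgeries). It is in fact cleanest to establish the band-move estimate at the level of the whole function $\Upsilon_K(t)$, whose linear-in-$t$ correction term specializes at $t = 1$ to the $\tfrac{1}{4} e(C)$ appearing above.

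For (iii): summing the inequality in (ii) along $U = K_0, K_1, \dots, K_k = K$, using $\upsilon(U) = 0$ and $\sum_i e(C_i) = e(F)$, gives $\left| \upsilon(K) - \tfrac{1}{4} e(F) \right| \le \tfrac{k}{2} = \tfrac{1}{2} b_1(F)$, which rearranges to $\left| 2\upsilon(K) - e(F)/2 \right| \le b_1(F)$. Minimizing the right-hand side over fillings $F$ with $e(F) = e$ then yields $\left| 2\upsilon(K) - e/2 \right| \le \gamma_4^e(K)$. The reduction in (i) and the telescoping in (iii) are formal; the entire weight of the theorem rests on the single-band estimate in (ii), which is the genuinely hard analytic input from \cite{oss:unoriented}.
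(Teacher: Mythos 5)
The paper contains no proof of Theorem~\ref{thm:oss} to compare you against: it is imported verbatim as Theorem 1.1 of \cite{oss:unoriented}, and the second inequality is immediate from the definition of $\gamma_4^e$ as a minimum over fillings with $e(F)=e$. Treating the statement as a citation is therefore exactly what the paper does, and your sketch is a fair reconstruction of the cited proof's architecture: reduce a filling to a stack of unoriented band cobordisms over an unknotted disk, prove a per-band estimate for $\upsilon$ with correction term $\tfrac{1}{4}e(C)$ computed by the linking-number formula (which appears in this paper as Lemma~\ref{lem:oss-e}), and telescope using additivity of $e$ and $\upsilon(U)=0$.

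Two caveats if this were to stand as an actual proof rather than an outline of the reference. First, step (i) is not automatic as stated: the radial Morse function on an arbitrary filling may have extra minima and also local maxima, in which case the number of saddles equals $b_1(F)$ plus (number of births $+$ number of deaths $-1$), which is strictly larger than $b_1(F)$, and naive telescoping then yields a strictly weaker bound; moreover intermediate levels can be links rather than knots, where $\upsilon$ is not defined. One needs the normal-form/handle-cancellation argument (or a separate treatment of births and deaths, which change neither $\upsilon$ nor $e$) to reduce to exactly $b_1(F)$ bands between knots. Second, the entire weight of the theorem rests on your step (ii), which you assert rather than prove, and the mechanism you propose (unoriented skein triangle, $d$-invariants of large surgeries) is speculative and not obviously the route taken in \cite{oss:unoriented}, which analyzes the grading shifts of maps induced by a band attachment on the relevant (unoriented/$t$-modified) version of the knot Floer complex. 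Since the paper itself uses the theorem as a black box, deferring (ii) to the citation is legitimate; just be explicit that what you have written is an annotated citation, not an independent proof.
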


To apply Theorem~\ref{thm:oss} to the proof of Theorem~\ref{thm:rbc}, we will use Jabuka and Van Cott's computation of the $\upsilon$ invariant for a torus knot in terms of its pinching sequence \eqref{eq:pinch}.

\begin{thm}[Theorem 1.14 of \cite{jvc:non-ori-milnor}] \label{thm:jvc}
 If $T(p,q)$ is transformed to $T(p_0,1)$ in the construction of the pinch surface, then
	\[2\upsilon(T(p,q)) = b_1(F(p,q)) + \frac{1}{2}(p_0 - pq).\]
\end{thm}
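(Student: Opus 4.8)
The plan is to prove the identity by induction on the length $n$ of the pinching sequence~\eqref{eq:pinch}. The base case $n=0$ is the unknot $T(p_0,1)$: there $\upsilon = 0$, $b_1(F(p_0,1)) = 0$, and $p_0 - p_0\cdot 1 = 0$, so the asserted equality reads $0 = 0$. For the inductive step, let the first pinch move in~\eqref{eq:pinch} be $T(p,q) = T(p_n,q_n) \xrightarrow{\epsilon_n} T(r,s) = T(p_{n-1},q_{n-1})$. Because the pinch move is unique up to isotopy, the pinching sequence of $T(r,s)$ is the length-$(n-1)$ tail of~\eqref{eq:pinch}, terminating at the same unknot $T(p_0,1)$; hence $F(p,q)$ is obtained by gluing the pinch surface $F(r,s)$ onto the pinch cobordism $F_n$ of the first move. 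Consequently $b_1(F(p,q)) = b_1(F(r,s)) + 1$, and, by additivity of the normal Euler number together with the computation $e(F_n) = p_{n-1}q_{n-1} - p_n q_n = rs-pq$ from the proof of Lemma~\ref{lem:e}, $\tfrac12 e(F(p,q)) = \tfrac12 e(F(r,s)) + \tfrac12(rs-pq)$. Writing $\Phi(p,q) := b_1(F(p,q)) + \tfrac12(p_0-pq)$ for the right-hand side of the theorem (so $\Phi(p,q) = b_1(F(p,q)) + \tfrac12 e(F(p,q))$ by Lemma~\ref{lem:e}), these relations give $\Phi(p,q) = \Phi(r,s) + 1 + \tfrac12(rs-pq)$, so after substituting the inductive hypothesis $2\upsilon(T(r,s)) = \Phi(r,s)$ the theorem for $T(p,q)$ becomes equivalent to the \emph{local claim}
\[
	2\upsilon(T(p,q)) = 2\upsilon(T(r,s)) + 1 + \tfrac12(rs-pq).
\]

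One half of the local claim is essentially free. Applying Theorem~\ref{thm:oss} to the filling $F(p,q)$ and invoking Lemma~\ref{lem:e} gives $2\upsilon(T(p,q)) \le \tfrac12(p_0-pq) + b_1(F(p,q)) = \Phi(p,q)$, which by the inductive hypothesis is exactly the $\le$-direction of the local claim; moreover the two-sided form of Theorem~\ref{thm:oss} confines $2\upsilon(T(p,q))$ to $[\Phi(p,q)-2n,\ \Phi(p,q)]$, and since $e(F(p,q)) \equiv 2b_1(F(p,q)) \bmod 4$ and $\upsilon$ is integer-valued, both endpoints and $2\upsilon(T(p,q))$ are even, so $2\upsilon(T(p,q)) \in \{\Phi(p,q) - 2j : 0 \le j \le n\}$. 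The entire content of the theorem is therefore that $j = 0$, which, through the induction above, reduces to the single lower bound
\[
	2\upsilon(T(p,q)) \ge 2\upsilon(T(r,s)) + 1 + \tfrac12(rs-pq)
\]
for each pinch move. This is precisely what the general machinery fails to supply: a cobordism version of Theorem~\ref{thm:oss} applied to $F_n$ (which has $b_1(F_n)=1$) yields only the weaker bound $2\upsilon(T(p,q)) \ge 2\upsilon(T(r,s)) + \tfrac12(rs-pq) - 1$, short by $2$.

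The crux is thus this per-pinch lower bound on $\upsilon$, and I would derive it from the explicit structure of the knot Floer complex of torus knots rather than from any cobordism inequality. Concretely, my plan is to combine an explicit evaluation of $\upsilon(T(p,q)) = \Upsilon_{T(p,q)}(1)$ — obtainable from the staircase description of the knot Floer complex of $T(p,q)$, equivalently from a Euclidean/continued-fraction recursion for $\Upsilon_{T(p,q)}$ of the type developed by Feller and Krcatovich — with the explicit description of the pinch data ($n$, $p_0$, and the signs $\epsilon_k$) governed by the arithmetic of Lemma~\ref{lem:pinch-move}, and then verify the local claim directly. The main obstacle I anticipate is exactly this reconciliation: a single pinch move $(p,q) \mapsto (p-2t,\,q-2u)$ with $t \equiv -q^{-1} \bmod p$ and $u \equiv p^{-1} \bmod q$ is not one step of the classical Euclidean algorithm but a composite of several, so one must track how $\Upsilon_{T(\cdot,\cdot)}(1)$ changes across the corresponding elementary steps, account carefully for the sign $\epsilon_n$ and for the possible index reversal, and confirm that the net change is always exactly $1 + \tfrac12(rs-pq)$ — in particular that the ``$\pm1$'' ambiguity left open by Theorem~\ref{thm:oss} and the parity count always resolves to $+1$. (An alternative would be to sharpen the single-band $\upsilon$-inequality by exploiting the positivity of the torus knots and of the pinch bands, but I expect the route through the explicit $\Upsilon$-function to be the more robust one.)
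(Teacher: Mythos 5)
This statement is quoted from Jabuka--Van Cott (Theorem 1.14 of \cite{jvc:non-ori-milnor}); the paper you were given offers no proof of it, so the benchmark is their argument, which indeed runs through an explicit computation of $\upsilon(T(p,q))=\Upsilon_{T(p,q)}(1)$ via the Feller--Krcatovich recursion and a careful number-theoretic analysis of the pinch sequence. Your reduction is sound and well organized: the induction on the length of \eqref{eq:pinch}, the identities $b_1(F(p,q))=b_1(F(r,s))+1$ and $e(F(p,q))=e(F(r,s))+rs-pq$ (via additivity and the computation in Lemma~\ref{lem:e}), and the observation that Theorem~\ref{thm:oss} plus the parity constraint pins $2\upsilon(T(p,q))$ to the discrete set $\{\Phi(p,q)-2j\,:\,0\le j\le n\}$ are all correct, and you correctly diagnose that the entire content of the theorem is the per-pinch lower bound $2\upsilon(T(p,q))\ge 2\upsilon(T(r,s))+1+\tfrac12(rs-pq)$, which no general cobordism inequality supplies.

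The genuine gap is that this lower bound --- the local claim --- is never proved; it is only announced as a plan. The step you defer (``reconcile the composite pinch move $(p,q)\mapsto(p-2t,q-2u)$ with the elementary Euclidean steps of the $\Upsilon$ recursion, track the sign $\epsilon_n$ and possible index reversal, and confirm the net change is exactly $1+\tfrac12(rs-pq)$'') is precisely the substantive content of Jabuka--Van Cott's theorem, occupying the bulk of their argument: one must establish closed-form or recursive expressions for $\upsilon$ of torus knots, relate $t$ and $u$ of Lemma~\ref{lem:pinch-move} to the continued-fraction data, and handle the sign-change and near-degenerate cases (e.g.\ when $p-2t$ or $q-2u$ is small or negative) where the bookkeeping is most delicate. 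As written, your argument proves only $2\upsilon(T(p,q))\le b_1(F(p,q))+\tfrac12(p_0-pq)$ together with a parity constraint; the equality asserted by the theorem remains unestablished. The route you sketch is viable --- it is essentially the published one --- but until the local claim is carried out in detail, this is a reduction plus a research plan, not a proof.
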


Combining Theorem~\ref{thm:jvc} with Lemma~\ref{lem:e} yields a relationship between $\upsilon$ of a torus knot and the normal Euler number of its pinch surface.

\begin{cor} \label{cor:upsilon-e}
	Under the conditons of Theorem~\ref{thm:jvc}, we obtain:
	\begin{enumerate}
	\item $2\upsilon(T(p,q)) = b_1(F(p,q)) + \frac{1}{2}e(F(p,q))$ and hence
	\item $2\upsilon(T(p,q)) \geq \frac{1}{2}e(F(p,q))$.
	\end{enumerate}
\end{cor}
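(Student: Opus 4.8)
The plan is to obtain both statements by a direct substitution, so the argument is essentially immediate once the two main ingredients are in hand. For part (1), I would start from the formula of Theorem~\ref{thm:jvc},
\[
2\upsilon(T(p,q)) = b_1(F(p,q)) + \frac{1}{2}(p_0 - pq),
\]
and simply replace the quantity $p_0 - pq$ using Lemma~\ref{lem:e}, which identifies it with $e(F(p,q))$. This yields $2\upsilon(T(p,q)) = b_1(F(p,q)) + \frac{1}{2}e(F(p,q))$, as claimed. The only thing to check is that the hypotheses of the two results are compatible, which they are: both are stated under the assumption that $T(p,q)$ is transformed to $T(p_0,1)$ in the construction of the pinch surface, i.e.\ exactly ``the conditions of Theorem~\ref{thm:jvc}'' referenced in the corollary.

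For part (2), I would invoke the elementary fact that $b_1(F(p,q)) \geq 0$, since it is a Betti number of a nonempty surface (indeed, a positive number, as $F(p,q)$ is non-orientable). Dropping the nonnegative term $b_1(F(p,q))$ from the equality in part (1) immediately gives the inequality $2\upsilon(T(p,q)) \geq \frac{1}{2}e(F(p,q))$.

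I do not expect any genuine obstacle here: the real content is carried by Lemma~\ref{lem:e} (the normal Euler number computation, the main technical contribution of the paper) and by Jabuka--Van Cott's $\upsilon$ formula in Theorem~\ref{thm:jvc}. The corollary merely records their combination, and the proof is two lines. If anything, the one point worth a word of care is simply confirming that the $p_0$ appearing in Lemma~\ref{lem:e} and the $p_0$ appearing in Theorem~\ref{thm:jvc} refer to the same integer, namely the first index of the terminal unknot $T(p_0,1)$ in the pinching sequence~\eqref{eq:pinch}; this is clear from the setup, since both statements fix the same pinch surface $F(p,q)$.
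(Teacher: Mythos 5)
Your proof is correct and matches the paper's intent exactly: the corollary is obtained by substituting the normal Euler number computation of Lemma~\ref{lem:e} into the $\upsilon$ formula of Theorem~\ref{thm:jvc}, with part (2) following from $b_1(F(p,q)) \geq 0$. No issues.
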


Returning to the original Theorems~\ref{thm:gl} and \ref{thm:oss}, we may combine them obtain a bound on the non-orientable $4$-genus independent of the normal Euler number.

\begin{cor}[Theorem 1.2 of \cite{oss:unoriented}]
\label{cor:oss}
	For any knot $K \subset S^3$, we have
	\[\left| \upsilon(K) - \frac{\sigma(K)}{2} \right| \leq \gamma_4(K).\]
\end{cor}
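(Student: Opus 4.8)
The plan is to eliminate the normal Euler number from Theorems~\ref{thm:gl} and \ref{thm:oss} by applying both of them to a single, genus-minimizing filling. Concretely, I would fix a non-orientable filling $F$ of $K$ that realizes the non-orientable $4$-genus, so that $b_1(F) = \gamma_4(K)$, and write $e = e(F)$ for its normal Euler number. The key point is that \emph{the same} $F$ (hence the same value of $e$) may be fed into both inequalities simultaneously.

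First I would invoke Theorem~\ref{thm:gl} for this $F$ to get $\left|\sigma(K) - e/2\right| \leq b_1(F) = \gamma_4(K)$, and then Theorem~\ref{thm:oss} for the same $F$ to get $\left|2\upsilon(K) - e/2\right| \leq b_1(F) = \gamma_4(K)$. Next I would apply the triangle inequality to the difference, noting that the $e/2$ terms cancel:
\[
\left|2\upsilon(K) - \sigma(K)\right| = \left|\bigl(2\upsilon(K) - \tfrac{e}{2}\bigr) - \bigl(\sigma(K) - \tfrac{e}{2}\bigr)\right| \leq \left|2\upsilon(K) - \tfrac{e}{2}\right| + \left|\sigma(K) - \tfrac{e}{2}\right| \leq 2\gamma_4(K).
\]
Dividing through by $2$ yields $\left|\upsilon(K) - \sigma(K)/2\right| \leq \gamma_4(K)$, as desired.

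I do not expect a genuine obstacle here: the argument is a one-line triangle-inequality manipulation. The only point requiring a moment's care is conceptual rather than technical — one must combine the two bounds at a common, unknown normal Euler number $e = e(F)$ coming from a fixed minimal-genus filling, rather than trying to optimize the refined quantities $\gamma_4^e$ over varying $e$ (where the bounds would not line up). Once the two inequalities are pinned to the same $F$, the $e$-dependence drops out automatically.
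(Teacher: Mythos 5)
Your proof is correct and is exactly the argument the paper intends: it states that Corollary~\ref{cor:oss} follows by combining Theorems~\ref{thm:gl} and \ref{thm:oss}, and your implementation (apply both bounds to a single genus-minimizing filling $F$, cancel the common $e(F)/2$ via the triangle inequality, divide by $2$) is the standard way to carry that out. No gaps.
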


Jabuka and Van Cott were able to determine the sign of the quantity on the left of the inequality above for torus knots.

\begin{prop}[Corollary 1.10 of \cite{jvc:non-ori-milnor}] \label{prop:upsilon-sigma}
	For any torus knot $T(p,q)$, we have $2\upsilon(T(p,q)) \geq \sigma(T(p,q))$. 
\end{prop}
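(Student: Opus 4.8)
The plan is to apply the Gilmer--Livingston signature bound of Theorem~\ref{thm:gl} directly to the pinch surface $F(p,q)$ and then to recognize the resulting upper bound for $\sigma(T(p,q))$ as exactly $2\upsilon(T(p,q))$ via Corollary~\ref{cor:upsilon-e}. Put differently, among all non-orientable fillings the pinch surface is the one that makes the classical signature inequality ``see'' the $\upsilon$ invariant, and feeding it into Theorem~\ref{thm:gl} is precisely what forces the asserted sign.

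Carrying this out, first I would specialize Theorem~\ref{thm:gl} to $K = T(p,q)$ and the filling $F = F(p,q)$, obtaining
\[\left| \sigma(T(p,q)) - \tfrac{1}{2}e(F(p,q)) \right| \leq b_1(F(p,q)).\]
Discarding one half of the absolute value leaves the one-sided estimate
\[\sigma(T(p,q)) \leq b_1(F(p,q)) + \tfrac{1}{2}e(F(p,q)).\]
Then I would substitute the identity of Corollary~\ref{cor:upsilon-e}(1), namely
\[b_1(F(p,q)) + \tfrac{1}{2}e(F(p,q)) = 2\upsilon(T(p,q)),\]
which itself repackages Jabuka--Van Cott's formula (Theorem~\ref{thm:jvc}) together with the normal Euler number computation of Lemma~\ref{lem:e}. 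Combining the last two displays gives $\sigma(T(p,q)) \leq 2\upsilon(T(p,q))$, which is the claim.

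There is essentially no obstacle remaining at this point: all of the genuine content already lives in Lemma~\ref{lem:e} and in Theorem~\ref{thm:jvc}, and the argument above is just their juxtaposition with Theorem~\ref{thm:gl}. It is worth recording why the more naive route does not suffice: merely combining Theorems~\ref{thm:gl} and~\ref{thm:oss} yields only the symmetric bound $|\upsilon(K) - \sigma(K)/2| \leq \gamma_4(K)$ of Corollary~\ref{cor:oss}, which cannot by itself pin down the sign of $2\upsilon(K) - \sigma(K)$; one genuinely needs a concrete filling realizing the relevant extreme, and the pinch surface does exactly that. The only piece of bookkeeping is that the inequality as stated concerns the torus knots $T(p,q)$ with positive parameters fixed by the standing Assumption — any positive coprime pair can be relabeled to meet it, and the case $q = 1$ is the trivial unknot — so no separate normalization argument is needed.
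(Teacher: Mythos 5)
Your argument is correct, but note that the paper itself offers no proof of this proposition at all: it is imported verbatim as Corollary 1.10 of \cite{jvc:non-ori-milnor}, so there is no internal argument to compare against. Within the logical structure of this paper your derivation is sound and non-circular: applying Theorem~\ref{thm:gl} to the pinch surface gives $\sigma(T(p,q)) \leq b_1(F(p,q)) + \tfrac{1}{2}e(F(p,q))$, and Corollary~\ref{cor:upsilon-e}(1) identifies the right-hand side with $2\upsilon(T(p,q))$; that corollary depends only on Theorem~\ref{thm:jvc} and Lemma~\ref{lem:e}, neither of which uses the proposition being proved. The one caution is that Theorem~\ref{thm:jvc} is itself a Jabuka--Van Cott result, so your proof does not reduce the reliance on their paper --- it only works because their computation of $\upsilon$ along the pinch sequence is established independently of their Corollary 1.10 (it is, being a direct calculation of $\upsilon$ for torus knots, so there is no circularity, but this is worth saying explicitly). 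By contrast, the cited source obtains the inequality by comparing explicit formulas for $\upsilon$ and for the signature of torus knots, without reference to a particular filling. What your route buys, given the machinery already assembled in this paper, is brevity and a geometric interpretation: it amounts to the observation that the pinch surface $F(p,q)$ must lie in the wedge $W_\sigma$, which is exactly the picture underlying Section~\ref{sec:geography}; the trade-off is that it proves the inequality only under the paper's standing positivity Assumption and sign conventions for $\sigma$ and $e$, which, as you note, is all that is needed here.
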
 

Jabuka and Van Cott also completely characterized which torus knots have a pinch surface that realizes the bound in Corollary~\ref{cor:oss}. As the precise identification of the conditions on the pairs $(p,q)$ that determine these knots is not necessary for this paper, we leave the technicalities for Theorem 1.11 in \cite{jvc:non-ori-milnor}.  Nevertheless, we make the following definition.

\begin{defn} \label{defn:jvc-knot}
	A torus knot $T(p,q)$ is a \dfn{JVC knot} if its pinch surface realizes the bound in Corollary~\ref{cor:oss}, i.e. if 
	\[2 \upsilon(T(p,q)) - \sigma(T(p,q)) = 2b_1(F(p,q)).\]
\end{defn}

\begin{ex}
	As noted in Example 1.13 of \cite{jvc:non-ori-milnor}, the knots $T(2k,2k-1)$ are JVC knots.
\end{ex}

\begin{ex}
	The computations in the proof of Theorem 1.3 in \cite[\S5]{allen:geography} show that $T(3,k)$ is a JVC knot if and only if $k$ is equal to $4$ or $5$ modulo $6$.
\end{ex}

\begin{ex}
	Example 5.3 of \cite{jvc:3vs4}, together with Theorem 1.11(b) of \cite{jvc:non-ori-milnor}, show that $T(km+1,m)$ with $k \geq 1$, $m>1$, and both $m$ and $k$ odd are JVC knots.
\end{ex}

% ********************
\section{Proof of the Refined Batson Conjecture}
\label{sec:proof}

We now have all of the ingredients needed for the proof of Theorem~\ref{thm:rbc}.  Given a torus knot $T(p,q)$ and its pinch surface $F(p,q)$, the theorem follows from the following short computation, in which we use the notation $e = e(F(p,q))$ for ease of reading:
\begin{align*}
	b_1(F(p,q)) & \geq \gamma_4^e(T(p,q)) \\
	&\geq 2 \upsilon(T(p,q)) - \frac{1}{2}e && \text{by Theorem~\ref{thm:oss}} \\
	&= b_1(F(p,q)) && \text{by Corollary~\ref{cor:upsilon-e}(1)}
\end{align*}

Hence, we obtain $\gamma_4^e(T(p,q)) = b_1(F(p,q))$, as required.  

% ********************
\section{The Geography of the JVC Knots}
\label{sec:geography}

In this final section, we contextualize Theorem~\ref{thm:rbc} using Allen's geography question.  In particular, we prove Theorem~\ref{thm:jvc-geography}, which completely determines the geography of the JVC knots.  

We begin by defining, for a given knot $K$, two special wedges, which we illustrate in Figure~\ref{fig:wedges} (see also Figures 11 and 12 in \cite{allen:geography}):
\begin{enumerate}
\item Let $W_\sigma$ be the wedge $W_{(2\sigma(K),0)}$, which contains all pairs $(e,b)$ allowed by Theorem~\ref{thm:gl}.
\item Let $W_\upsilon$ be the wedge $W_{(4\upsilon(K),0)}$, which contains all pairs $(e,b)$ allowed by Theorem~\ref{thm:oss}.
\end{enumerate}
Thus, Theorems~\ref{thm:gl} and \ref{thm:oss} yield the following:

\begin{lem} \label{lem:sigma-upsilon-wedge}
	For any knot $K$, $R(K) \subset W_\sigma \cap W_\upsilon$.
\end{lem}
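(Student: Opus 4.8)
The plan is to simply unwind the definitions of the wedges $W_\sigma = W_{(2\sigma(K),0)}$ and $W_\upsilon = W_{(4\upsilon(K),0)}$ and verify that every pair realized by a filling meets the three defining conditions of each. So I would fix a pair $(e,b) \in R(K)$, which by definition means there is a non-orientable filling $F$ of $K$ with $e(F) = e$ and $b_1(F) = b$, and show $(e,b) \in W_\sigma \cap W_\upsilon$.

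First I would dispatch the two conditions that involve neither $\sigma$ nor $\upsilon$. The requirement $e \in 2\zz$ is exactly the fact, recalled in Section~\ref{ssec:normal-euler}, that the normal Euler number of a filling is always even; and the congruence $e \equiv 2b \bmod 4$ is precisely the relation $e(F) \equiv 2 b_1(F) \bmod 4$ cited there from \cite{Massey:Euler}. These are the ``ambient'' constraints common to both $W_\sigma$ and $W_\upsilon$, so it remains only to check the slope inequality $\tfrac12|e - E| \leq b - B$ for each apex $(E,B)$.

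Next I would handle the two slope inequalities in parallel. Applying Theorem~\ref{thm:gl} to $F$ gives $|\sigma(K) - e/2| \leq b_1(F) = b$; multiplying by $2$ yields $|2\sigma(K) - e| \leq 2b$, i.e.\ $\tfrac12|e - 2\sigma(K)| \leq b$, which is the required inequality for $(E,B) = (2\sigma(K),0)$, so $(e,b) \in W_\sigma$. Identically, Theorem~\ref{thm:oss} gives $|2\upsilon(K) - e/2| \leq b$, hence $\tfrac12|e - 4\upsilon(K)| \leq b$, so $(e,b) \in W_\upsilon$. Therefore $(e,b) \in W_\sigma \cap W_\upsilon$, and since $(e,b) \in R(K)$ was arbitrary, $R(K) \subset W_\sigma \cap W_\upsilon$.

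There is no genuine obstacle here: the lemma is a direct repackaging of Theorems~\ref{thm:gl} and \ref{thm:oss} together with the parity facts about $e(F)$, the wedge language being designed precisely so that those two genus–Euler-number bounds read geometrically as wedge constraints in the $(e,b)$-plane. The only point requiring care is the factor-of-two bookkeeping in the definition of $W_{(E,B)}$ — the apex sits at $(2\sigma(K),0)$ and $(4\upsilon(K),0)$ rather than at $(\sigma(K),0)$ and $(2\upsilon(K),0)$ — which is why I would clear denominators explicitly rather than carry halves through the inequalities.
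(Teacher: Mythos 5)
Your proof is correct and follows the same route as the paper, which derives the lemma directly from Theorems~\ref{thm:gl} and \ref{thm:oss} (the paper leaves the parity conditions and the factor-of-two translation implicit, which you have simply spelled out). No issues.
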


\begin{figure}
\labellist
\small\hair 2pt
 \pinlabel {$e$} [l] at 154 18
 \pinlabel {$b$} [b] at 134 100
 \pinlabel {$4\upsilon$} [t] at 98 16
 \pinlabel {$2\sigma$} [t] at 63 16
 \pinlabel {$(e(F),b_1(F))$} [l] at 67 54
 \pinlabel {$W_\sigma$} [] at 46 35
 \pinlabel {$W_\upsilon$} [] at 115 35
 \pinlabel {$W_F$} [] at 63 76
 \pinlabel {$e$} [l] at 351 18
 \pinlabel {$b$} [b] at 332 100
 \pinlabel {$4\upsilon$} [t] at 297 16
 \pinlabel {$2\sigma$} [t] at 242 16
 \pinlabel {$(e(F),b_1(F))$} [b] at 270 35
 \pinlabel {$W_\sigma$} [] at 225 35
 \pinlabel {$W_\upsilon$} [] at 314 35
 \pinlabel {$W_F$} [] at 270 70
\endlabellist
\includegraphics{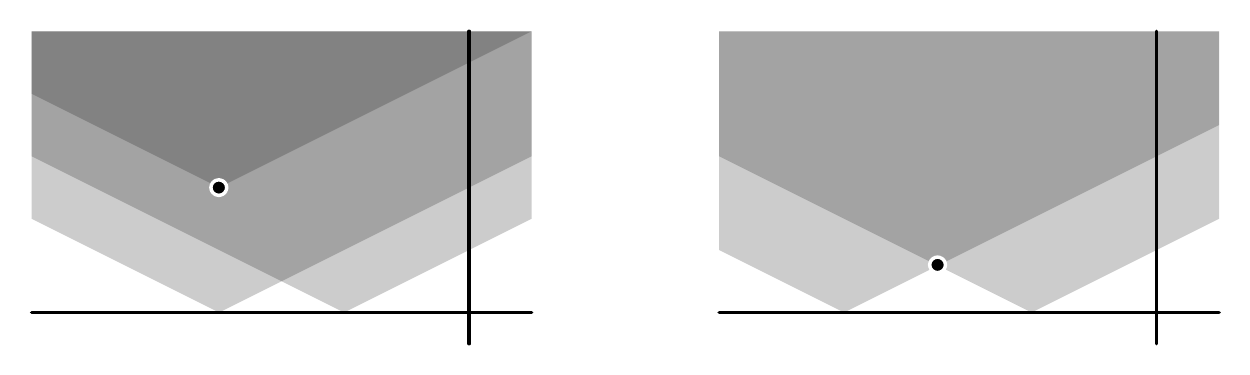}
\caption{The wedges $W_\sigma$, $W_\upsilon$, and $W_F$ for (left) an arbitrary knot $K$ and filling $F$ and (right) for a JVC knot $T(p,q)$ and $F = F(p,q)$.}
\label{fig:wedges}
\end{figure}

Next, we formalize a fact hinted at in the introduction, namely that every point in the wedge $W_F$ associated to a filling $F$ is realizable.

\begin{lem} \label{lem:F-wedge}
	If $F$ is a non-orientable filling of a knot $K$, then $W_F \subset R(K)$.
\end{lem}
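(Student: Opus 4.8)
The plan is to realize every point of $W_F$ by repeatedly attaching ``trivial crosscaps'' to the given filling $F$. Recall that $\rr\pp^2$ embeds in $S^4$ in two standard ways, one with normal Euler number $+2$ and one with normal Euler number $-2$ (see \cite{Massey:Euler}); denote the resulting closed surfaces by $P_+$ and $P_-$. Given any non-orientable filling $G$ of $K$, choose a small ball $B \subset \operatorname{int} B^4$ meeting $G$ in a trivial disk, choose a small ball in $S^4$ meeting $P_\epsilon$ in a disk, and glue the complements along their common boundary $(S^3,\text{unknot})$ to form the interior connected sum $G \mathbin{\#} P_\epsilon$. This is again a properly embedded non-orientable surface in $B^4$ with boundary $K$, i.e.\ a filling of $K$. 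Since the normal Euler number is additive under such a connected sum (the transverse pushoffs can be chosen to agree near, and miss, the gluing sphere) and $e(P_\epsilon) = 2\epsilon$, we get $e(G \mathbin{\#} P_\epsilon) = e(G) + 2\epsilon$; and from $\chi(G \mathbin{\#} P_\epsilon) = \chi(G) + \chi(\rr\pp^2) - 2 = \chi(G) - 1$ together with $\chi = 1 - b_1$ for a connected surface with a single boundary circle, we get $b_1(G \mathbin{\#} P_\epsilon) = b_1(G) + 1$.

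Now let $E = e(F)$ and $B = b_1(F)$, and fix $(e,b) \in W_F$. Since $e$ and $E$ are even, write $e - E = 2k$ with $k \in \zz$. Since $e \equiv 2b$ and $E \equiv 2B$ modulo $4$, we have $2k = e - E \equiv 2(b-B) \pmod 4$, hence $k \equiv b - B \pmod 2$; and the defining inequality of $W_F$ reads $|k| \le b - B$. Therefore $b - B - |k|$ is a nonnegative even integer, say $b - B = |k| + 2m$ with $m \ge 0$. Starting from $F$, apply the crosscap operation $|k|$ times with $\epsilon = \sgn k$ (and not at all if $k = 0$), and then a further $m$ times with $\epsilon = +1$ and $m$ times with $\epsilon = -1$. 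By the two formulas above, the resulting filling $F'$ of $K$ satisfies
\[
e(F') = E + 2k + 2m - 2m = e, \qquad b_1(F') = B + |k| + 2m = b .
\]
Hence $(e,b) \in R(K)$, and since $(e,b) \in W_F$ was arbitrary, $W_F \subset R(K)$.

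The only step that is not purely formal is the input that $\rr\pp^2$ embeds in $S^4$ with each of the normal Euler numbers $\pm 2$, together with the additivity of the normal Euler number under interior connected sum; both are classical. (Concretely, the operation $G \rightsquigarrow G \mathbin{\#} P_\epsilon$ can be performed by pushing a small finger of $G$ into $\operatorname{int} B^4$ and reattaching it with a half-twisted clasp, the sign of the twist producing $\epsilon = \pm 1$.) Note that the congruence $e(F') \equiv 2 b_1(F') \pmod 4$ is automatically preserved at each step, since a crosscap changes $e$ by $\pm 2$ and $b_1$ by $1$, so the consistency of the construction is not at issue.
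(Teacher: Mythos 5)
Your proposal is correct and follows essentially the same route as the paper: the paper's proof also modifies $F$ by connect-summing with a projective plane of normal Euler number $\pm 2$ (or, alternatively, by a band move as in its Figure on adding $\pm2$ to $e$), changing $(e,b_1)$ by $(\pm 2, +1)$, and then iterates to fill out $W_F$. You simply make explicit the parity/counting bookkeeping that the paper summarizes as ``repeating this process appropriately.''
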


\begin{proof}
	We may modify the filling $F$ to a new filling $F'$ with $b_1(F') = b_1(F)+1$ and $e(F') = e(F) \pm 2$, either by taking a connect sum with a projective plane with normal Euler number $\pm 2$ as in \cite[\S3.1]{allen:geography} or by applying Lemma~\ref{lem:oss-e} as in Figure~\ref{fig:add-2-e}.  Repeating this process appropriately yields fillings for all $(e,b) \in W_F$.
\end{proof}

\begin{figure}
\labellist
\hair 2pt
 \pinlabel {$=$} [ ] at 27 46
 \pinlabel {$=$} [ ] at 202 46
\endlabellist
\includegraphics{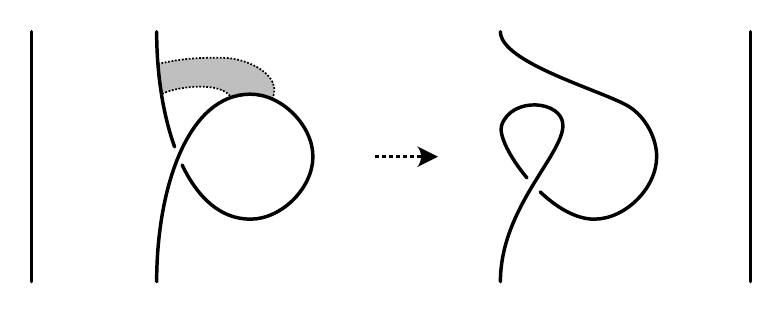}
\caption{Concatenating with the non-orientable band cobordism pictured results in a new filling of a given knot with $b_1$ increased by $1$ and $e$ changed by $2$. To change $e$ by $-2$, switch the crossing.}
\label{fig:add-2-e}
\end{figure}

We now proceed to prove Theorem~\ref{thm:jvc-geography}.  For readability, we denote the torus knot $T(p,q)$ by $K$ and the pinch surface for $T(p,q)$ by $F$.  The proof is visualized in Figure~\ref{fig:wedges}(b), where we see that the pinch surface $F$ lies at the intersection of the wedges $W_\sigma$ and $W_\upsilon$. 

Lemmas~\ref{lem:sigma-upsilon-wedge} and \ref{lem:F-wedge} imply that $W_F \subset R(K) \subset W_\sigma \cap W_\upsilon$.  Thus, it suffices to show that $W_\sigma \cap W_\upsilon \subset W_F$.  Suppose that $(e,b) \in W_\sigma \cap W_\upsilon$.  To show that $(e,b) \in W_F$, we need to prove that $\frac{1}{2}|e-e(F)| \leq b - b_1(F)$ (the parity properties required by the wedge are automatically satisfied).

First, rearrange Corollary~\ref{cor:upsilon-e}(1) to yield
\begin{equation} \label{eq:e-upsilon-b}
	e(F) = 4\upsilon(K) - 2b_1(F).
\end{equation}
Second, combine the sharpness of Corollary~\ref{cor:oss} with Proposition~\ref{prop:upsilon-sigma} to obtain the relation
\begin{equation} \label{eq:upsilon-sigma-b}
	4\upsilon(K) = 2 \sigma(K) + 4b_1(F).
\end{equation}
Plugging Equation~\eqref{eq:upsilon-sigma-b} into Equation~\eqref{eq:e-upsilon-b} produces
\begin{equation} \label{eq:e-sigma-b}
	e(F) = 2 \sigma(K) + 2b_1(F).
\end{equation}

Returning to the goal of proving that $\frac{1}{2}|e-e(F)| \leq b - b_1(F)$, we may compute that
\begin{align*}
	e(F) - e &= 4 \upsilon(K) - e - 2b_1(F) && \text{by \eqref{eq:e-upsilon-b}} \\
	&\leq 2b - 2b_1(F) && \text{since } (e,b) \in W_\upsilon
\end{align*}
and that
\begin{align*}
	e(F) - e &= 2 \sigma(K) - e + 2b_1(F) && \text{by \eqref{eq:e-sigma-b}} \\
	& \geq -2b + 2b_1(F) && \text{since } (e,b) \in W_\sigma
\end{align*}

This completes the proof of Theorem~\ref{thm:jvc-geography}.

\begin{rem}
	The proof of Theorem~\ref{thm:jvc-geography} potentially applies more generally than to JVC knots, as what we really needed was the sharpness of the bounds in Theorem~\ref{thm:oss} and Corollary~\ref{cor:oss} at a given filling $F$ of $K$, along with a guarantee that $\frac{\sigma(K)}{2}$ and $\frac{e(F)}{4}$ lie on the same side of $\upsilon(K)$, as in Proposition~\ref{prop:upsilon-sigma} and Corollary~\ref{cor:upsilon-e}(2), respectively.
\end{rem}

\begin{rem}
	The torus knot $T(4,9)$ investigated in \cite{lobb:counterex} presents a different picture than the JVC knots analyzed above.  Instead of intersecting in the wedge $W_{F(4,9)}$, the wedges $W_\sigma$ and $W_\upsilon$ coincide, and hence give less information than in the JVC knot case.  In fact, it is straightforward, if delicate, to calculate that the pinch surface has $e(F(4,9)) = -36$ and $b_1(F(4,9)) = 2$, while Lobb's minimizing surface $F_{min}$ has $e(F_{min}) = -34$ and $b_1(F_{min}) = 1$. While we already knew from Theorem~\ref{thm:rbc} and Corollary~\ref{cor:upsilon-e}(2) that the pinch surface $F(4,9)$ lies on the lower left boundary of the wedge $W_\upsilon$, we see that $F_{min}$ does as well; see Figure~\ref{fig:lobb-geography}.
\end{rem}

\begin{figure}
\labellist
\small\hair 2pt
 \pinlabel {$e$} [l] at 158 10
 \pinlabel {$-34$} [t] at 61 4
 \pinlabel {$-36$} [t] at 25 4
 \pinlabel {$-32 = 2\sigma = 4\upsilon$} [tl] at 86 4
 \pinlabel {$F_{min}$} [bl] at 64 28
 \pinlabel {$F(4,9)$} [bl] at 28 46
\endlabellist
\centering
\includegraphics{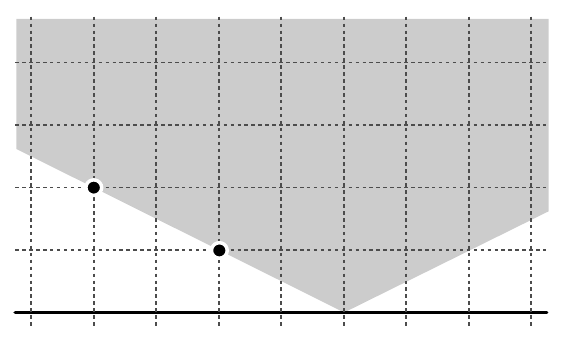}
\caption{Two realizable $(e,b)$ pairs for $T(4,9)$.}
\label{fig:lobb-geography}
\end{figure}

% ********************

\bibliographystyle{amsplain} 
\bibliography{main}

\end{document}